\crefname{appsec}{Appendix}{Appendices}
\theoremstyle{plain}
\newtheorem{theorem}{Theorem}[section]
\newtheorem{lemma}[theorem]{Lemma}
\newtheorem{claim}[theorem]{Claim}
\newtheorem{corollary}[theorem]{Corollary}
\newtheorem*{question*}{Question} \Crefname{question}{Question}{Questions}
\theoremstyle{definition}
\newtheorem{definition}[theorem]{Definition}
\newtheorem{question}[theorem]{Question}
\theoremstyle{remark}
\newcommand{\F}{\mathcal{F}}
\newcommand{\DVC}{\mathrm{VC_{dual}}}
\newcommand{\xqed}[1]{%
	\leavevmode\unskip\penalty9999 \hbox{}\nobreak\hfill
	\quad\hbox{\ensuremath{#1}}}
\newcommand{\Endofdef}{\xqed{\lozenge}}
\title{The extremal number of Venn diagrams}
\author{
	Peter Keevash\thanks{Mathematical Institute, University of Oxford, Oxford, UK. Email:
		\href{mailto:keevash@maths.ox.ac.uk} {\nolinkurl {keevash@maths.ox.ac.uk}}. Research supported in part by ERC Consolidator Grant 647678.}
	\and
	Imre Leader\thanks{Centre for Mathematical Sciences, University of Cambridge, Cambridge, UK. Email:
		\href{mailto:i.leader@dpmms.cam.ac.uk} {\nolinkurl{i.leader@dpmms.cam.ac.uk}}.
	}
	\and
	Jason Long\thanks{Mathematical Institute, University of Oxford, Oxford, UK. Email:
		\href{mailto:jason.long@maths.ox.ac.uk} {\nolinkurl{jason.long@maths.ox.ac.uk}}.
	}
	\and
	Adam Zsolt Wagner\thanks{Department of Mathematics, ETH, Z\"urich, Switzerland. Email:
		\href{mailto:zsolt.wagner@math.ethz.ch} {\nolinkurl{zsolt.wagner@math.ethz.ch}}.}
}
\begin{document}
	
	\maketitle
	
	\begin{abstract}
		We show that there exists an absolute constant $C>0$ such that any family $\mathcal{F}\subset \{0,1\}^n$ of size at least $Cn^3$ has dual VC-dimension at least 3. Equivalently, every family of size at least $Cn^3$ contains three sets such that all eight regions of their Venn diagram are non-empty. This improves upon the $Cn^{3.75}$ bound of Gupta, Lee and Li and is sharp up to the value of the constant.
	\end{abstract}
	
	\section{Introduction}
	We study an extremal problem concerning the maximum size of a set system avoiding a certain forbidden configuration. Such problems are ubiquitous in combinatorics, statistics and theoretical computer science, and are the focus of a number of fundamental results and conjectures. One central notion is that of the \emph{Vapnik-Chervonenkis} dimension which plays an important role in statistical learning theory~\cite{Vapnik} and discrete and computational geometry (see~\cite{VCgeo} and the surveys~\cite{VCgeosurvey1},~\cite{VCgeosurvey2}). 
	
	We write $\mathcal{P}(n)$ for the powerset of $[n]$. A family $\F \subset \mathcal{P}(n)$ \emph{shatters} a set $S\subset [n]$ if for all $A\subset S$ there exists a set $B\in  \F$ with $B\cap S=A$.  The \emph{Vapnik-Chervonenkis} dimension, or \emph{VC-dimension} for short, of a family $\F \subset \mathcal{P}(n)$ is defined as $$\mathrm{VC}(\F) = \max\{|S|:\F \text{ shatters } S\}.$$
	
	A cornerstone result of extremal combinatorics due to Sauer and Shelah bounds the size of a family in terms of its VC-dimension.
	\begin{lemma}[Sauer--Shelah~\cite{sauer}]\label{lem:sauershelah}
		For any family $\F\subset \mathcal{P}(n)$ we have $$|\F|\leq\sum_{k=0}^{\mathrm{VC}(\F)}\binom{n}{k}.$$
	\end{lemma}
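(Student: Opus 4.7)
The plan is to induct on $n$, with the base case $n = 0$ (equivalently $d := \mathrm{VC}(\F) = 0$) being immediate. For the inductive step I would fix the coordinate $n$ and split $\F$ into two auxiliary families living on $[n-1]$: the \emph{projection}
$$\F_0 := \{A \cap [n-1] : A \in \F\}$$
and the ``doubled'' trace
$$\F_1 := \{B \subset [n-1] : B \in \F \text{ and } B \cup \{n\} \in \F\}.$$
A direct count gives $|\F| = |\F_0| + |\F_1|$, since each $B \in \F_1$ is the restriction of \emph{two} sets of $\F$ (namely $B$ and $B \cup \{n\}$), while each $B \in \F_0 \setminus \F_1$ is the restriction of exactly one.

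The heart of the argument is the pair of inequalities $\mathrm{VC}(\F_0) \leq d$ and $\mathrm{VC}(\F_1) \leq d-1$. The first is routine: any $S \subset [n-1]$ shattered by the projection $\F_0$ is also shattered by $\F$ itself. For the second, the key observation is that if $S \subset [n-1]$ is shattered by $\F_1$, then $\F$ shatters the larger set $S \cup \{n\}$. Indeed, given any $T \subset S$, choose $B \in \F_1$ with $B \cap S = T$; then the trace of $B \in \F$ on $S \cup \{n\}$ is $T$, while the trace of $B \cup \{n\} \in \F$ on $S \cup \{n\}$ is $T \cup \{n\}$, covering all $2^{|S|+1}$ possible traces. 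Hence $d \geq |S|+1$, as needed.

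The inductive step then closes quickly: by the induction hypothesis applied to both sub-families,
$$|\F| = |\F_0| + |\F_1| \leq \sum_{k=0}^{d}\binom{n-1}{k} + \sum_{k=0}^{d-1}\binom{n-1}{k} = \sum_{k=0}^{d}\binom{n}{k},$$
where the final equality is Pascal's identity. The main conceptual hurdle is discovering the right pair $(\F_0, \F_1)$: one that both sums to $|\F|$ on the nose \emph{and} yields a strict VC-dimension drop in one of the two pieces. Once this pairing is in place, the rest of the argument is essentially formal, and the bound is tight (e.g.\ for $\F$ equal to the family of all sets of size at most $d$).
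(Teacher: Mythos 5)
Your proof is correct: it is the standard projection-and-trace induction on the ground set, and the two inequalities $\mathrm{VC}(\F_0)\le d$ and $\mathrm{VC}(\F_1)\le d-1$ together with Pascal's identity close the induction cleanly. The paper does not reprove Sauer--Shelah (it cites the original sources), but this is exactly the argument the authors have in mind --- they reuse the same $|\F|=|\F_0|+|\F_1|$ decomposition in the proof of \cref{lem:boosting}, which they explicitly describe as ``very similar to the inductive proof of the Sauer-Shelah lemma.''
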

	
	In the present paper we consider the dual notion of VC-dimension. Given a family $\F\subset \mathcal{P}(n)$, we can view $\F$ as a $0/1$ incidence matrix $F$ of dimension $n\times |\F|$ with rows indexed by $x\in[n]$ and columns indexed by $A\in\F$. The \emph{dual VC-dimension} of $\F$, written as $\DVC(\F)$, is then simply the VC-dimension of $F^T$, the transpose of the matrix $F$. Equivalently, the dual VC-dimension of a family $\F$ is the largest $k\in\mathbb{N}$ such that there exist sets $A_1,A_2,\ldots,A_k\in\F $ with all $2^k$ regions of the form $B_1\cap B_2\cap \ldots \cap B_k$ where $B_i\in\{A_i,[n]\setminus A_i\}$ being non-empty. We say that such sets $A_1,\dots,A_k$ form a \emph{$k$-Venn diagram}.
	
	The existence of a $k$-Venn diagram in $\F $ corresponds to the presence of a certain submatrix of the matrix $F$ -- specifically, a row-column permutation of the matrix $M_k$ with $k$ columns and $2^k$ rows given by all possible binary sequences of length $k$. This interpretation places the dual VC-dimension in the context of widely studied problems on \emph{forbidden configurations}~\cite{ansteesurvey}. A highly influential conjecture is that of Anstee and Sali~\cite{anstee}, which predicts (up to a constant factor) the maximum number of edges in a hypergraph $\F$ before the corresponding matrix $F$ contains a row-column permutation of a certain submatrix $M$. This prediction involves a quantity $X(M)$ which is NP-hard to calculate~\cite{ansteeNPhard} in general, but straightforward to calculate for small $M$. This conjecture has been verified for cases in which the number of rows is small~\cite{ansteespecialcases1,anstee}, but most cases remain open when the number of rows is at least six.
	
	A natural class of special cases of the conjecture of Anstee and Sali is obtained by considering the matrices $M_k$ described above. Understanding the corresponding forbidden configuration problem would provide a dual version of the Sauer-Shelah lemma, giving the maximum possible size of a family $\F$ on ground set $[n]$ with $\DVC(\F)\le k$.
	
	For $k=1$, such a result is well known. We say that two sets $A,B\subset [n]$ form a  \emph{crossing pair} if all four sets $A\cap B, \overline{A}\cap B, A\cap \overline{B}$ and $\overline{A}\cap \overline{B}$ are non-empty. A family $\F\subseteq\mathcal{P}(n)$ has $\DVC(\F)\le 1$ if and only if $\F$ does not contain sets $A_1$ and $A_2$ such that $A_1$ and $A_2$ are a crossing pair, and the maximum size of such a family is $4n-2$, see~\cite{edmonds}. More recently, the problem of bounding the size of a family avoiding $k$ pairwise crossing sets $A_1,\dots,A_k$ has been considered. Denoting by $g_k(n)$ the maximum size of a family on ground set $[n]$ that does not contain $k$ sets that are pairwise crossing, Karzanov and Lomonosov~\cite{karzanov} conjectured that $g_k(n)=O_k(n)$. The best upper bound is $g_k(n)=O_k(n\log^*n)$, as shown by    Kupavskii, Pach and Tomon~\cite{pityu}.
	
	The next step towards a dual Sauer-Shelah lemma is therefore to bound the size of a family $\F$ for which $\DVC(\F)\le 2$. To achieve this, we wish to bound the size of a family  $\F\subset\mathcal{P}(n)$ given that $\F$ is not allowed to contain a $3$-Venn diagram, that is to say three sets $A,B,C\in\F$ such that all eight regions $A\cap B\cap C, \overline{A}\cap B \cap C, \ldots, \overline{A}\cap\overline{B}\cap\overline{C}$ are non-empty. A lower bound of the form $cn^3$ follows by considering the family of sets of size at most three, while an upper bound of the form $Cn^7$ follows easily from the Sauer-Shelah lemma. The Anstee-Sali conjecture predicts that the lower bound is correct up to the value of $c$.
	
	A connection between dual VC-dimension and the performance of an algorithm of Karger and Stein~\cite{KargerStein} for finding minimal $k$-cuts was noted by Gupta, Lee and Li~\cite{Gupta}. For an edge-weighted graph $G=(V,E)$, a minimal $k$-cut is a subset $E'\subset E$ of minimal weight such that $G'=(V,E\setminus E')$ has at least $k$ connected components. By showing that if $\F$ is a family of subsets of $[n]$ with $\DVC(\F)\leq 2$ then $|\F|\leq Cn^{3.75}$, they were able to give an algorithm enumerating the minimal $k$-cuts of an $n$ vertex graph in time $\mathcal{O}(n^{(2-\epsilon)k})$ for an explicit $\epsilon>0$, improving on the previous best known bounds of $n^{(2-o(1))k}$.
	
	Our main result is closing the remaining polynomial gap for the maximum size of a set family of dual VC-dimension at most two.
	
	\begin{theorem}\label{thm:main}
		If $\F$ is a family of subsets of $[n]$ and $\DVC(\F)\leq 2$ then $|\F|= O(n^3)$.
	\end{theorem}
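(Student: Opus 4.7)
The plan is to argue by induction on $n$, decomposing $\F$ using a well-chosen crossing pair. As the base case, if $\F$ contains no crossing pair then $\DVC(\F) \le 1$, and Edmonds' bound $|\F| \le 4n-2$ cited in the introduction suffices. So we may assume $\F$ contains a crossing pair $(A,B)$, which partitions $[n]$ into four non-empty regions $P_{ij} = \{x : \mathbf{1}_{x\in A} = i,\ \mathbf{1}_{x\in B} = j\}$ for $i,j \in \{0,1\}$. The key observation is that the no-3-Venn hypothesis forces every $C \in \F$ to be \emph{trivial} on at least one region, meaning $P_{ij} \subseteq C$ or $P_{ij} \cap C = \emptyset$ for some $(i,j)$, for otherwise $(A,B,C)$ would realise all eight Venn regions.

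This yields a decomposition $\F = \bigcup_{(i,j),\,\epsilon \in \{0,1\}} \F^\epsilon_{ij}$ into at most eight pieces, where $\F^\epsilon_{ij}$ collects the sets trivial on $P_{ij}$ in direction $\epsilon$. Each $\F^\epsilon_{ij}$, viewed on the reduced ground set $[n] \setminus P_{ij}$, again satisfies $\DVC \le 2$, so the inductive hypothesis gives $|\F^\epsilon_{ij}| \le C(n-|P_{ij}|)^3$.

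The hard part will be closing the induction with the correct constant. The naive union bound is of order $2C\sum_{ij}(n-|P_{ij}|)^3$, which for balanced regions $|P_{ij}| \approx n/4$ is about $(27/8)Cn^3$, too large by a constant factor. To overcome this I would try one of three refinements: (i) show that each $\F^\epsilon_{ij}$, thanks to the witnesses $A$ and $B$, inherits a strictly stronger property, ideally $\DVC \le 1$ on $[n] \setminus P_{ij}$, reducing the piece to linear size; (ii) choose $(A,B)$ extremally so that the decomposition is highly imbalanced (for example, forcing one region to nearly fill $[n]$ so the dominant piece lives on a vanishing ground set); or (iii) iterate the decomposition, amortising the loss over several rounds. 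If these refinements resist, I would fall back on a global double-counting argument: for each ordered triple $(x,y,z) \in [n]^3$, count pairs (or triples) in $\F$ realising specified membership patterns on $\{x,y,z\}$, and use the $\DVC \le 2$ hypothesis together with averaging over $[n]$ to upper-bound $|\F|$ against a cubic polynomial in $n$. I expect the main technical work of the paper is precisely this step — either isolating the right refined structure of the sub-families induced by a crossing pair, or designing a tailored global counting identity — that squeezes the previous $n^{3.75}$ bound down to the sharp $n^3$.
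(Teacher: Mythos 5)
Your proposal correctly identifies the central obstacle — a naive induction on a crossing-pair decomposition loses a constant factor of roughly $27/8$ at each step and does not close — but it does not resolve it, and the refinements you list do not match what the paper actually does. In particular, refinement (i) fails as stated: taking $\F$ to be all sets of size at most $3$, the piece $\F^\epsilon_{ij}$ still has dual VC-dimension $2$ on the reduced ground set, so there is no generic drop to $\DVC\le 1$. Refinements (ii) and (iii), and the fall-back global double count, are left entirely unargued, so the proposal stops at the point where the real work begins.

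The paper's route is genuinely different and is built around a structural characterization rather than a direct induction on $n$. It first works with \emph{partial} Venn diagrams: Lemma~\ref{lem:gupta} of Gupta, Lee and Li gives three sets with four of the six middle regions non-empty from only $O(n)$ sets, and the boosting Lemma~\ref{lem:boosting} converts an $O(n^k)$ bound for $s$ filled regions into an $O(n^{k+1})$ bound for $s+1$ regions. Used naively this would give $n^4$, which is no better than the prior work. The crucial saving is the structural Lemma~\ref{lem:struct}: any family avoiding a 3-Venn with five of the inner seven regions filled decomposes into a linear-size junk part plus a \emph{pair-like} part (a family whose members are all unions of two disjoint blocks from a basis, with a minimum-degree condition on the blocks). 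Your observation that a crossing pair forces each third set to be trivial on some region is in the same spirit as an ingredient of that lemma's proof, but the lemma uses the stronger ``avoids five of seven inner regions'' hypothesis to conclude that a third set can \emph{split at most one} of the four regions — a much tighter statement — and then recurses on the four induced families to extract the pair-like structure. Lemma~\ref{lem:main} then exploits pair-likeness directly (together with the small combinatorial Lemma~\ref{lem:useful}) to get six of the seven inner regions from just $O(n^2)$ sets, saving a full factor of $n$ over what boosting alone would give. One more application of boosting reaches all seven inner regions at $O(n^3)$, and the final step splits $\F$ by set size and complements to handle the outer region. So while your decomposition-by-crossing-pair idea does surface inside the structural lemma, the essential new idea of the paper — that super-linear families avoiding five inner regions are forced to be pair-like, and that this structure can be exploited to find six regions at the quadratic scale — is absent from your sketch, and without it the induction cannot be closed.
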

	
	This can be seen as a further step towards a dual Sauer-Shelah lemma and as a resolution of a natural case of the Anstee-Sali conjecture. Furthermore, Theorem~\ref{thm:main} can be directly applied to give a small improvement to the factor $2-\epsilon$ in the exponent of the theoretical running time of the algorithm described in~\cite{Gupta} for enumerating minimal $k$-cuts, although the calculations required to determine the new value of $\epsilon$ are involved so we omit them here.
	
	\section{Setting up}

	For any set $A\subset [n]$ let $\overline{A}:=[n]\setminus A$. Given three sets $A,B,C$, let $\mathrm{V}_3(A,B,C)$ be the eight-element multiset $\{A\cap B\cap C, \overline{A}\cap B \cap C, \ldots, \overline{A}\cap \overline{B}\cap \overline{C}\}$, which we will refer to as the 3-Venn diagram of $A,B,C$. Recall that we will be primarily interested in the number of non-empty regions in such Venn diagrams. We refer to the set $A\cap B \cap C$ as the \emph{innermost region} and to $\overline{A}\cap \overline{B}\cap \overline{C}$ as the \emph{outermost region} of their Venn diagram. We begin by recalling a result from~\cite{Gupta}.
	\begin{lemma}\label{lem:gupta}[Lemma 5.11 of~\cite{Gupta}]
		If $\mathcal{F}$ is a family on ground set $[n]$ of size at least $8n$ then there are sets $A,B,C$ so that at least four out of the six sets in $\mathrm{V}_3(A,B,C)\setminus \{A\cap B \cap C, \overline{A}\cap \overline{B}\cap \overline{C}\}$ are non-empty.
	\end{lemma}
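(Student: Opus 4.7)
The plan is to start with Edmonds' $4n{-}2$ bound on cross-free families (the $\DVC \leq 1$ case of the dual Sauer--Shelah bound). Since $|\mathcal{F}| \geq 8n > 4n-2$, there is a crossing pair $A, B \in \mathcal{F}$. Label the non-empty quadrants $Q_1 = A \cap B$, $Q_2 = A \setminus B$, $Q_3 = B \setminus A$, $Q_4 = [n] \setminus (A \cup B)$.

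For a third set $C \in \mathcal{F}$, I would count the non-empty middle regions of $\mathrm{V}_3(A, B, C)$ quadrant by quadrant. Each of $Q_2, Q_3$ contributes at least $1$ non-empty middle (being non-empty) and up to $2$ (if $C$ properly crosses it), while $Q_1$ and $Q_4$ each contribute $1$ iff $Q_1 \not\subseteq C$ and $Q_4 \cap C \neq \emptyset$ respectively. Hence $C$ yields at least $4$ non-empty middles whenever $Q_1 \not\subseteq C$ and $Q_4 \cap C \neq \emptyset$, so every \emph{bad} $C$ lies in $\mathcal{F}_1 \cup \mathcal{F}_2$, where $\mathcal{F}_1 := \{C : Q_1 \subseteq C\}$ and $\mathcal{F}_2 := \{C : C \subseteq A \cup B\}$.

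Next, define $\mathcal{F}_1' := \{C \setminus Q_1 : C \in \mathcal{F}_1\}$ on $[n] \setminus Q_1$ and $\mathcal{F}_2'$ viewed on $A \cup B$; these are bijective projections of the respective families. If neither contains a crossing pair, Edmonds' bound gives $|\mathcal{F}_i'| \leq 4(n - |Q_i|) - 2 \leq 4n - 6$, so under the assumption that no good triple exists (which forces $\mathcal{F} \subseteq \mathcal{F}_1 \cup \mathcal{F}_2$) we obtain $|\mathcal{F}| \leq 8n - 12 < 8n$, a contradiction. Hence, without loss of generality, $\mathcal{F}_1'$ contains a crossing pair lifting to distinct $C_1, C_2 \in \mathcal{F}_1$ with $(C_1 \setminus Q_1, C_2 \setminus Q_1)$ crossing on $[n] \setminus Q_1$.

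The final step is to produce a good triple from $\{A, B, C_1, C_2\}$. The natural candidates are $(A, C_1, C_2)$ and $(B, C_1, C_2)$: writing $R_1, R_2, R_3, R_4$ for the four non-empty sub-quadrants of this new crossing pair inside $Q_2 \cup Q_3 \cup Q_4$, the middle regions of $\mathrm{V}_3(A, C_1, C_2)$ decompose as intersections $R_j \cap Q_2$ versus $R_j \cap (Q_3 \cup Q_4)$, and analogously for $\mathrm{V}_3(B, C_1, C_2)$ with $Q_3$ vs.\ $Q_2 \cup Q_4$. A case analysis on how the $R_j$ distribute among $Q_2, Q_3, Q_4$ should show that at least one of the two triples has $\geq 4$ non-empty middles. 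The main obstacle will be this final verification, especially in boundary cases where the sub-crossing pair forces $C_1$ or $C_2$ to coincide with $A$ or $B$ (causing one candidate triple to degenerate); choosing the initial crossing pair $(A, B)$ extremally, for instance by minimising $|Q_1| + |Q_4|$ or by insisting the sub-crossing pair avoids the images of $A, B$, should rule out these pathologies.
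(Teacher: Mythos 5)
Your setup is clean and correct as far as it goes: using Edmonds' $4n-2$ bound to find a crossing pair $(A,B)$, observing that a third set $C$ immediately produces four non-empty middle regions unless $Q_1\subseteq C$ or $C\subseteq A\cup B$, and then using $|\mathcal{F}_1'|+|\mathcal{F}_2'|\le 8n-12<8n$ to force a crossing pair inside (say) $\mathcal{F}_1'$. The paper itself cites Lemma~5.11 from Gupta, Lee and Li without reproving it, so there is no proof in the paper to compare against; I am judging your argument on its own terms.

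The gap is in the finishing step, and it is not a matter of an unverified-but-true case analysis: the claim that one of $(A,C_1,C_2)$ or $(B,C_1,C_2)$ always has four non-empty middles is simply false. Take $A=Q_1\cup Q_2$, $B=Q_1\cup Q_3$, fix a non-empty $R_1\subsetneq Q_4$, and set $C_1=A\cup R_1$, $C_2=B\cup R_1$. Both contain $Q_1$, and $(C_1\setminus Q_1, C_2\setminus Q_1)=(Q_2\cup R_1,\,Q_3\cup R_1)$ is a genuine crossing pair on $[n]\setminus Q_1$ with sub-quadrants $R_1,\,Q_2,\,Q_3,\,Q_4\setminus R_1$. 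But $A\subseteq C_1$, which already empties two of the six middle regions of $(A,C_1,C_2)$, and a third, $\overline{A}\cap C_1\cap\overline{C_2}=R_1\setminus C_2$, vanishes because $R_1\subseteq C_2$; a direct check shows the non-empty middles of $(A,C_1,C_2)$ are exactly $R_1,\,Q_2,\,Q_3$, hence three. By symmetry $(B,C_1,C_2)$ also has exactly three, and so do $(A,B,C_1)$ and $(A,B,C_2)$; no triple from $\{A,B,C_1,C_2\}$ works. Moreover this is not the degeneracy you flagged ($C_1\ne A$, $C_2\ne B$, and the sub-crossing pair avoids the images $Q_2,Q_3$ of $A,B$), and minimising $|Q_1|+|Q_4|$ does not help either, since $|C_1\cap C_2|+|\overline{C_1\cup C_2}|=|Q_1|+|R_1|+|Q_4\setminus R_1|=|Q_1|+|Q_4|$. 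The real obstruction is containment ($A\subseteq C_1$, $B\subseteq C_2$), so to salvage this route you would need to choose the crossing pair inside $\mathcal{F}_1'$ (or re-choose $(A,B)$) so as to exclude such containments, and then actually carry out the resulting case analysis -- neither of which is in the sketch.
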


	Given three sets $A,B,C$ we refer to some proper subset $\mathrm{V}'(A,B,C)$ of the set $\mathrm{V}_3(A,B,C)$ of regions as a \emph{partial 3-Venn diagram}. The lemma above shows that $\mathcal{O}(n)$ sets are enough to find three sets $A,B,C$ such that the partial 3-Venn diagram $\mathrm{V}'(A,B,C)=\mathrm{V}_3(A,B,C)\setminus \{A\cap B \cap C, \overline{A}\cap \overline{B}\cap \overline{C}\}$ has at least four non-empty regions. The following lemma, whose proof is very similar to the inductive proof of the Sauer-Shelah lemma and appears as Lemma~5.9 in~\cite{Gupta}, allows us to increase the number of filled regions at the cost of a factor of $n$.
	
	\begin{lemma}\label{lem:boosting}
		Let $\mathrm{V}'(A,B,C)$ be a partial 3-Venn diagram with $r$ regions. Suppose that there exists a constant $C$ such that in any family $\mathcal{F}\subset\mathcal{P}(n)$ of size at least $Cn^k$ we can find sets $A,B,C$ in $\F$ such that $\mathrm{V}'(A,B,C)$ has $s<r$ regions filled. Then there exists a constant $C'$ such that in any family $\mathcal{F}'\subset\mathcal{P}(n)$ of size at least $C'n^{k+1}$ we can find sets $A,B,C$ in $\F'$ such that $\mathrm{V}'(A,B,C)$ has at least $s+1$ regions filled.
	\end{lemma}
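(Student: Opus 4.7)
The plan is to argue by induction on $n$, mirroring the classical proof of the Sauer-Shelah lemma. Given $\F' \subset \mathcal{P}(n)$ of size at least $C'n^{k+1}$, single out the element $n$ and form the two ``traces'' $\F_0 = \{F \in \F' : n \notin F\}$ and $\F_1 = \{F \setminus \{n\} : F \in \F',\, n \in F\}$ on $[n-1]$. Writing $\mathcal{D} = \F_0 \cap \F_1$ for the collection of $F \subset [n-1]$ such that both $F$ and $F \cup \{n\}$ lie in $\F'$, and $\mathcal{U} = \F_0 \cup \F_1$ for the full trace, one has $|\F'| = |\mathcal{U}| + |\mathcal{D}|$. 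The proof now splits depending on which of these is large.

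If $|\mathcal{D}|$ is at least $Cn^k \geq C(n-1)^k$, the hypothesis applied to $\mathcal{D}$ on ground set $[n-1]$ furnishes a triple $A', B', C' \in \mathcal{D}$ with $s$ filled regions of $\mathrm{V}'$. By definition of $\mathcal{D}$, each of the eight $(A, B, C)$ obtained by independently choosing $A \in \{A', A' \cup \{n\}\}$ and similarly for $B, C$ is a triple of elements of $\F'$. Each such choice determines exactly which region of $\mathrm{V}_3(A, B, C)$ the element $n$ falls in; and since restricting any such Venn diagram to $[n-1]$ recovers that of $A', B', C'$, the $s$ already-filled regions are preserved. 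As $s < r$, we may pick the eight-fold choice so that $n$ lands in any region of $\mathrm{V}'$ that was previously empty, thereby upgrading to $s+1$ filled regions.

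If instead $|\mathcal{D}| < Cn^k$, then $|\mathcal{U}| > C'n^{k+1} - Cn^k$, which exceeds $C'(n-1)^{k+1}$ once $C'$ is a sufficiently large constant depending on $C$ and $k$ (this boils down to $n^{k+1} - (n-1)^{k+1} \geq (k+1)(n-1)^k$). The induction hypothesis on $n$, applied to $\mathcal{U}$ on $[n-1]$, yields $A'', B'', C'' \in \mathcal{U}$ with $s+1$ filled regions of $\mathrm{V}'$; lifting each back to an element of $\F'$ by adjoining $n$ if needed preserves all filled regions, since they live within $[n-1]$. The base case is handled by taking $C'$ large enough that $C'n^{k+1} > 2^n$ for all $n$ below the induction threshold, rendering the hypothesis vacuous. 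The only real bookkeeping is choosing a single absolute constant $C' = C'(C, k)$ that makes both parts of the induction work, and no step poses a serious obstacle; the conceptual heart of the argument is the extension step in the first case.
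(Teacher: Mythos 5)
Your argument is correct and follows essentially the same route as the paper: split $\F'$ along a distinguished element into the union and intersection traces on $[n-1]$, apply the inductive hypothesis to the union trace if it is large, and otherwise apply the original hypothesis to the intersection trace and use the eight-fold freedom in adjoining the distinguished element to $A', B', C'$ to place it into an empty region of $\mathrm{V}'$. The only differences are cosmetic (you single out $n$ rather than $1$, and you order the two cases oppositely).
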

	\begin{proof}
		We prove the result by induction on $n$ -- the base case is trivial for $C'\geq C$ sufficiently large.
		
		For the inductive step, we let $\mathcal{F}'\subset\mathcal{P}(n)$ have size at least $C'n^{k+1}$ and consider the families $\mathcal{F}_1'=\{F\in \mathcal{F}': 1\in F\}$ and $\mathcal{F}_2'=\{F\in \mathcal{F}': 1\not\in F\}$. Then $|\mathcal{F}'|=|\mathcal{F}_1'\cup\F_2'|+|\F_1'\cap\F_2'|$ and both $\F_1$ and $\F_2$ can be regarded as families on an $(n-1)$-element ground set. As $|\F'|\ge C'n^{k+1}$ we must either have $|\F_1\cup\F_2|\ge C'(n-1)^{k+1}$ in which case we are done by induction, or $|\F_1\cap\F_2|\ge C'n^k$. In the latter case, we can find sets $A',B',C'$ such  that $\mathrm{V}'(A',B',C')$ has $s$ regions filled. But $\F'$ contains all the sets $\{A',B',C',A'\cup\{1\},B'\cup\{1\},C'\cup\{1\}\}$ and so we can choose $A\in\{A',A'\cup\{1\}\}$, $B\in\{B',B'\cup\{1\}\}$ and $C\in\{C',C'\cup\{1\}\}$ such that the element $1$ belongs to an empty region of $\mathrm{V}'(A',B',C')$. Thus $\mathrm{V}'(A,B,C)$ has at least $s+1$ regions filled.
	\end{proof}
	Combining Lemmas~\ref{lem:gupta} and \ref{lem:boosting}, we have the following corollary.
	\begin{corollary}\label{cor:cor1}
		There exists a constant $D$ such that if $\mathcal{F}$ is a family on ground set $[n]$ of size at least $Dn^4/k$ such that all members of $\mathcal{F}$ have size between $k$ and $n-k$ then $\mathcal{F}$ contains a 3-Venn with all eight regions being non-empty.
	\end{corollary}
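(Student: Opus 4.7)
The plan has two stages: first, force six of the eight Venn regions to be non-empty using only the tools already at hand; second, use the size hypothesis to force the remaining two via a double-counting trick.

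For the first stage, I would fix the partial 3-Venn $V'(A,B,C) := \mathrm{V}_3(A,B,C)\setminus\{A\cap B\cap C,\overline{A}\cap\overline{B}\cap\overline{C}\}$, consisting of the six ``middle'' regions. Lemma~\ref{lem:gupta} guarantees that any $\mathcal{F}\subseteq\mathcal{P}(n)$ of size at least $8n$ contains sets $A,B,C$ with four regions of $V'$ filled. Applying Lemma~\ref{lem:boosting} twice with this same $V'$ bumps the count to five filled regions (at cost $O(n^2)$) and then to six (at cost $O(n^3)$). Thus there is a constant $C^{*}$ such that every $\mathcal{F}\subseteq\mathcal{P}(n)$ with $|\mathcal{F}|\ge C^{*} n^3$ contains $A,B,C$ whose 3-Venn has all six middle regions non-empty.

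For the second stage, I would double-count triples $(i,j,F)$ with $F\in\mathcal{F}$, $i\in F$, and $j\notin F$. Each $F\in\mathcal{F}$ contributes $|F|(n-|F|)\ge k(n-k)$ such triples; we may assume $k\le n/2$ (otherwise the hypothesis of the corollary is vacuous, since no $F$ satisfies $k\le|F|\le n-k$), so $k(n-k)\ge kn/2$. Averaging over the $n^2$ pairs $(i,j)$ produces indices $i\ne j$ with
$$|\mathcal{F}_{i,j}| := \bigl|\{F\in\mathcal{F}:i\in F,\ j\notin F\}\bigr| \ge \frac{k|\mathcal{F}|}{2n}.$$
Taking $D\ge 2C^{*}$ ensures $|\mathcal{F}_{i,j}|\ge C^{*} n^3$. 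Applying the first stage to $\mathcal{F}_{i,j}$, viewed as a family on ground set $[n]$, yields $A,B,C\in\mathcal{F}_{i,j}$ with all six middle regions of $V'(A,B,C)$ non-empty; since $i\in A\cap B\cap C$ and $j\in\overline{A}\cap\overline{B}\cap\overline{C}$, the innermost and outermost regions are also non-empty, so all eight regions of $\mathrm{V}_3(A,B,C)$ are non-empty.

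I do not foresee a real obstacle: the key observation is simply that the factor $k(n-k)/n^2$ lost in the double-counting step is precisely what is needed to convert the $n^4/k$ hypothesis into the $n^3$ input required by the first stage. The only routine matters of care are the symmetry reduction to $k\le n/2$ and choosing $D$ large enough (essentially $D\ge 2C^{*}$).
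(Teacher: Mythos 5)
Your proof is correct and follows essentially the same route as the paper: both use Lemma~\ref{lem:gupta} plus two applications of Lemma~\ref{lem:boosting} to force six middle regions at the $n^3$ threshold, then use the size constraint to locate an element inside all three sets and one outside all three. The only (cosmetic) difference is in the final pigeonhole step, where you double-count ordered pairs $(i,j)$ directly while the paper first complements to fix an excluded element and then applies a single pigeonhole on an included element.
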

	\begin{proof}
		From~\cref{lem:gupta} and~\cref{lem:boosting} we know that there exists a constant $C$ such that if a family in $\mathcal{P}(n)$ has size at least $Cn^3$ then it contains a 3-Venn diagram with all regions, except possibly the innermost and outermost regions, non-empty. Suppose now we are given a family $\F\subset \mathcal{P}(n)$ of size at least $2Cn^4/k$. Without loss of generality we may assume that at least half of the sets in $\F$ do not contain the element 1 -- if that is not the case, we may replace every set in $\F$ by their complement. Let $\F':=\{F\in\F:1\not\in F\}$ so that $|\F'|\geq |\F|/2\geq Cn^4/k$. As in $\F'$ every set has size at least $k$, there is an element $x\in [n]$ such that $|\F'_{x}|=|\{F\in\F':x\in F\}|\geq Cn^3$. Hence $\F'_x$ contains three sets that form a 3-Venn diagram with all regions non-empty, except possibly the outermost and innermost regions. However, together with the elements $1$ and $x$ we find that all eight regions must be non-empty. Setting $D=2C$ finishes the proof.
	\end{proof}
	
	We will prove the following lemma, from which~\cref{thm:main} follows easily.
	
	\begin{lemma}\label{lem:main}
		There exists $C$ such that if $\mathcal{F}$ is a family on the ground set $[n]$ of size at least $Cn^{2}$ then $\mathcal{F}$ contains a 3-Venn with six of the inner seven regions filled.
	\end{lemma}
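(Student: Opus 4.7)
My plan is to combine Lemma~\ref{lem:gupta} with a single application of Lemma~\ref{lem:boosting}, applied inside a sub-family of $\mathcal{F}$ in which every triple automatically has $R_{111}\neq\emptyset$. Concretely, I would look for a pair of distinct elements $x,y\in[n]$ such that
\[
\mathcal{F}_{x,\bar y}:=\{F\in\mathcal{F}:\, x\in F,\,y\notin F\}
\]
has size at least $C_1 n^2$ for a suitable constant $C_1$. Viewing $\mathcal{F}_{x,\bar y}$ as a family on $[n]\setminus\{x,y\}$ by deleting $x$ from every set, Lemma~\ref{lem:gupta} produces a triple with $4$ of the middle $6$ regions filled, and one application of Lemma~\ref{lem:boosting} (with $\mathrm{V}'$ the middle six, $s=4$, $r=6$, $k=1$) boosts this to $5$ middle regions filled. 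Since every set in $\mathcal{F}_{x,\bar y}$ contains $x$ but not $y$, the resulting triple additionally satisfies $R_{111}\ni x$ and $R_{000}\ni y$; together with the $5$ filled middle regions this accounts for $6$ of the $7$ inner regions.

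To find the required pair, I would use the averaging identity
\[
\sum_{x\neq y} |\mathcal{F}_{x,\bar y}| \;=\; \sum_{F\in\mathcal{F}} |F|(n-|F|),
\]
which yields a pair with $|\mathcal{F}_{x,\bar y}|\geq C_1 n^2$ whenever the right-hand side is of order $n^4$. For $|\mathcal{F}|\geq Cn^2$ with $C$ large, this is immediate in the ``middling-size'' regime where a positive fraction of $\mathcal{F}$ consists of sets of size in $[\alpha n,(1-\alpha)n]$ for some fixed $\alpha>0$: each such set contributes $\Omega(n^2)$ to the sum.

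The main obstacle is therefore the regime where almost all sets of $\mathcal{F}$ have size close to $0$ or close to $n$, in which the above averaging fails. Since $\binom{n}{\leq k}=O(n^k)$, for $|\mathcal{F}|\geq Cn^2$ with $C$ sufficiently large the family must still contain $\Omega(n^2)$ sets of size between $3$ and $n-3$, so the hard case reduces to analyzing families concentrated on small (or, by complementation, nearly-full) sets of bounded size. Here I would try to build the required triple directly, for instance by locating a ``loose triangle'' $\{\{x_1,x_2,x_4\},\{x_1,x_3,x_5\},\{x_2,x_3,x_6\}\}$ among the size-$3$ members of $\mathcal{F}$, which realizes $6$ middle regions and hence $6$ of the $7$ inner ones (with only $R_{111}$ empty). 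Ruling out pathological sub-structures such as ``star'' families in which every set shares a common element, which admit at most $5$ of the $7$ inner regions in any triple, is the delicate part; these families have size at most $\binom{n-1}{2}$, so for $C$ large enough the hypothesis $|\mathcal{F}|\geq Cn^2$ itself excludes them and one must exploit this carefully in the detailed argument.
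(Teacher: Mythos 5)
Your high-level plan splits into a "middling sizes" regime (handled by an averaging argument plus one boosting step) and a "concentrated sizes" regime, and it is the second regime that contains essentially all of the difficulty of the problem -- which your sketch does not resolve.

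The averaging identity $\sum_{x\neq y}|\mathcal{F}_{x,\bar y}|=\sum_F|F|(n-|F|)$ only yields a pair with $|\mathcal{F}_{x,\bar y}|\gtrsim n^2$ when a constant fraction of $\mathcal{F}$ has size in $[\alpha n,(1-\alpha)n]$. In the complementary regime, your reduction to "families concentrated on sets of bounded size" is where the real content lives, and the "loose triangle" idea does not close it. Knowing that $|\mathcal{F}|\geq Cn^2$ with $C>1/2$ rules out a \emph{pure} star (all triples through a fixed point), but it does not rule out more complicated families of bounded-size sets, nor mixtures of different sizes, nor near-extremal perturbations of stars and of the family of all pairs. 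Moreover the bound "star families admit at most 5 of the 7 inner regions" is specific to size-$3$ stars; stars of sets of size $\geq 5$ can fill all seven inner regions, so a case analysis by set size would be needed. In short, the paragraph beginning "Ruling out pathological sub-structures\ldots" flags the crux but does not argue it.

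This is exactly the gap that the paper's structural lemma (Lemma~\ref{lem:struct}) is designed to fill: it proves, by induction on $n$, that \emph{any} family avoiding even five of the seven inner regions decomposes as a linear-size junk pile plus a pair-like family with respect to some basis $\mathcal{P}$ -- a much more robust notion than "family of pairs" or "star family," since the parts $P_u$ can be arbitrary sets and the structure is only required locally (each popular pair must extend in at least $10$ disjoint ways on each side). The deduction of Lemma~\ref{lem:main} from this is also not a direct search for a good triple: it runs a Sauer--Shelah-style induction, applies the structural lemma to every link $\mathcal{F}_2(x)=\mathcal{F}_x\cap\mathcal{F}_{\bar x}$, uses a counting/pigeonhole step to find a single set $F$ lying in two different pair-like families $\mathcal{H}_x^+$ and $\mathcal{H}_y^+$, and then manufactures the six-region Venn diagram from the two incompatible pair-like descriptions of $F$ via Lemma~\ref{lem:useful}. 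You would need something playing the role of the structural lemma (or an equally strong classification of $\Omega(n^2)$-size families of bounded-size sets with no six-of-seven Venn) before your hard case can be completed.
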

	
	This lemma will follow from a structural lemma, whose precise statement will require some set-up. Essentially, the idea behind the proof of~\cref{thm:main} is that if we could prove that any family $\mathcal{F}$ of size larger than $Cn$ contained a 3-Venn with six non-empty regions then~\cref{thm:main} would follow easily from the proof method of the Sauer-Shelah Lemma(\cref{lem:sauershelah}). However, there are examples of families with super-linear size that do not contain six regions of a 3-Venn: consider, for example, the family of sets of size two. Our aim will be to show that in fact all super-linear families avoiding six regions of a 3-Venn have some structure related to that of the family of pairs. We will then use this structural information to provide an improvement over the usual Sauer-Shelah induction step in this case.

	We now provide some definitions that allow us to describe the structure that we hope to find.
	
	\begin{definition}\label{treeg} 
		A family $\mathcal{F}$ is \emph{pair-like with respect to a family $\mathcal{P}$} if the sets in $\mathcal{F}$ are all equal to $P_u\cup P_v$ for disjoint $P_u$ and $P_v$ in $\mathcal{P}$, with the additional property that for each  pair of disjoint $P_u,P_v\in\mathcal{P}$ with $P_u\cup P_v\in \mathcal{F}$ there exists
		\begin{itemize}
			\item a family $\mathcal{P}_{uv}$ consisting of at least 10 different $P_w$ such that $P_u\cup P_w\in\mathcal{F}$ for each $P_w\in \mathcal{P}_{uv}$ and the family $\mathcal{P}_{uv}\cup\{P_u,P_v\}$ consists of disjoint sets,
			\item and similarly a family $\mathcal{P}_{vu}$ consisting of at least 10 different $P_w$ such that $P_v\cup P_w\in\mathcal{F}$ for each $P_w\in \mathcal{P}_{vu}$ and the family $\mathcal{P}_{vu}\cup\{P_u,P_v\}$ consists of disjoint sets.
		\end{itemize}
		We call $\mathcal{P}$ the \emph{basis} of $\mathcal{F}$. If $F\in \mathcal{F}$ is written $P_u\cup P_v$, we call $P_u$ and $P_v$ the \emph{components} of $F$. We may simply say that $\mathcal{F}$ is pair-like if there exists some suitable basis $\mathcal{P}$.
	\end{definition}
	
	Given a family $\mathcal{F}$ which is pair-like with respect to some family $\mathcal{P}$, we say that some set $P_u\in\mathcal{P}$ is \emph{popular} if $P_u$ is the component of some set $F\in\mathcal{F}$ (and thus the component of at least 10 such sets).
	
	In the next section we will state and prove the structural lemma discussed earlier, and then in~\cref{sec:deduction} we will show how this lemma may be used to prove~\cref{lem:main}.
	
	\section{The structural lemma}
	
	For technical reasons, it is convenient to treat the outermost region somewhat differently to the others. Thus our results in this section will refer to the inner seven regions of the 3-Venn, meaning all regions except $\overline{A}\cap\overline{B}\cap\overline{C}$.
	
	\begin{lemma}\label{lem:struct}
		There exist positive constants $\alpha, \beta$ such that the following holds. Let $\mathcal{F}$ be a family on the ground set $[n]$ that avoids a 3-Venn diagram with five of the inner seven regions filled. Then $\mathcal{F}=\mathcal{F}_1\cup\mathcal{F}_2$ where $|\mathcal{F}_1|\le \alpha n-\beta$ and $\mathcal{F}_2$ is pair-like.
	\end{lemma}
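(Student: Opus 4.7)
The plan is to extract, for most $F \in \mathcal{F}$, a canonical disjoint decomposition $F = P_F \cup Q_F$ and to collect the pieces into the basis $\mathcal{P}$. The set $\mathcal{F}_1$ will contain those $F$ which either do not admit such a decomposition or whose decomposition fails the 10-extension condition of Definition~\ref{treeg}; the remaining $\mathcal{F}_2 = \mathcal{F} \setminus \mathcal{F}_1$ will then be pair-like over $\mathcal{P}$. As a preparatory step, I would pass to atoms by quotienting by the equivalence $x \sim y$ iff $x \in F \Leftrightarrow y \in F$ for every $F \in \mathcal{F}$, so that every set in $\mathcal{F}$ and every candidate basis element is a union of atoms, of which there are at most $n$.

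The core structural step is to establish that for all but $O(n)$ sets $F \in \mathcal{F}$, there is a disjoint decomposition $F = P_F \cup Q_F$ such that the non-trivial restrictions $F \cap F'$ with $F' \in \mathcal{F}$ are equal to $P_F$ or $Q_F$. I would prove this by contrapositive: if $F$ admits three sets $F_1,F_2,F_3 \in \mathcal{F}$ whose restrictions to $F$ realise three genuinely independent non-trivial subsets of $F$ (i.e.\ no two-part partition of $F$ absorbs all three), then one seeks a 5-filled inner 3-Venn inside a triple from $\{F,F_1,F_2,F_3\}$. The independence of the restrictions inside $F$ provides several non-empty inner regions, and the behaviour of $F_i \setminus F$ outside $F$ is used to fill in the remaining regions and push the count to $\geq 5$, contradicting the hypothesis.

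Having obtained pair-decompositions for the good $F$, I would collect the pieces into a tentative basis $\mathcal{P}_0 = \bigcup_{F \text{ good}} \{P_F, Q_F\}$; since each piece is a non-empty union of atoms and the collection has controlled multiplicity (again via the avoidance condition, applied to triples of sets sharing a common piece), one obtains $|\mathcal{P}_0| = O(n)$. The 10-extension condition is then enforced by pruning: any good set $P_u \cup P_v \in \mathcal{F}$ for which $P_u$ or $P_v$ lacks 10 disjoint extensions is moved to $\mathcal{F}_1$. A standard charging argument, whereby each piece is responsible for at most $O(1)$ pruned sets, yields $|\mathcal{F}_1| \le \alpha n - \beta$ for appropriate constants $\alpha,\beta$, with the $-\beta$ absorbing boundary effects.

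The main obstacle is the structural step. Pair-decompositions of $F$ are not a priori unique; restrictions $F \cap F'$ may partially overlap $P_F$ and $Q_F$ rather than equal one of them; and the ``three independent restrictions'' situation admits several sub-configurations (for instance when the three pieces nest inside one another versus when they genuinely trisect $F$). Translating each of these into an actual forbidden 5-filled inner 3-Venn will likely require a careful case analysis of partial 3-Venn configurations across up to six atom-classes, combined with a symmetry argument swapping the roles of $F$ and $[n]\setminus F$, and this is where I expect the avoidance hypothesis to be used most delicately.
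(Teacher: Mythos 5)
Your approach is fundamentally different from the paper's and, as written, has a genuine gap exactly where you flag it. The paper proceeds by induction on the size of the ground set: it locates a crossing pair $F_1,F_2$ with nonempty regions $A,B,C,D$, notes that the avoidance hypothesis forces every further set to properly split at most one of $A,B,C,D$, projects the four resulting (disjoint) subfamilies onto the strictly smaller ground sets $A^+=A\cup\{b,c,d\}$, \dots, $D^+$, applies the inductive hypothesis there, and then unprojects and merges the four pair-like parts; a separate degenerate case (three of the four regions singletons) is handled by splitting by set-size, in which $\mathcal F_{\mathrm{rest}}$ has $\DVC\le1$ and the 2-element sets form a graph that becomes pair-like over singletons after discarding low-degree vertices. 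The recursion is essential: the resulting basis $\mathcal P$ contains pieces that are unions of whole blocks $B,C,D$ with subsets of $A$, and the pair-like structure is inherited rather than built globally.

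The core structural claim you propose --- that for all but $O(n)$ sets $F$ there is a single partition $F=P_F\cup Q_F$ such that every nontrivial restriction $F\cap F'$ ($F'\in\mathcal F$) equals $P_F$ or $Q_F$ --- is substantially stronger than pair-likeness and I do not see how it follows from the avoidance hypothesis. Two incomparable restrictions $F\cap F'$, $F\cap F''$ already cut $F$ into up to four nonempty cells, but those are only the regions of $\mathrm V_3(F,F',F'')$ \emph{inside} $F$; whether you reach five inner regions depends delicately on $F'\setminus F$ and $F''\setminus F$ (they could be empty, or could agree outside $F$, killing off regions). You rightly identify this as ``the main obstacle,'' but the proposal only gestures at a case analysis; it does not supply the argument, and the paper's own construction indicates the claim is not a stable invariant: once the basis contains non-singleton, overlapping pieces (which the recursive step shows is unavoidable), the intersection of two family members $P_u\cup P_v$ and $P_w\cup P_z$ is a union of up to four cross-intersections and need not equal any single $P_i$.

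The pruning step is a second gap. If some $P_u$ fails to have 10 \emph{pairwise disjoint} extensions, the family of extensions $\{P_w : P_u\cup P_w\in\mathcal F,\ P_w\cap P_u=\emptyset\}$ may nonetheless be large (just lacking a disjoint 10-set), and moving all corresponding $P_u\cup P_w$ to $\mathcal F_1$ could discard $\Theta(n)$ sets per bad $P_u$, giving $\Theta(n^2)$ in total rather than $O(n)$; the pruning also cascades, since removing sets can newly invalidate other $P_u$, and you would need to control the fixed point. ``Each piece is responsible for $O(1)$ pruned sets'' is asserted but not justified, and it is precisely the kind of thing the paper's inductive construction is designed to avoid having to prove directly.
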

	
	\begin{proof}
		This lemma will be proved by induction. The main idea will be to find collections of elements such that only a relatively small number of sets distinguish some two of those elements (meaning that the set contains some, but not all, of the elements). We will then collapse this collection of points into a single point and use induction, while adding the sets that distinguished those points to a `junk pile' which will become $\mathcal{F}_1$.
		
		Let $\mathcal{F}$ be a family that does not contain a 3-Venn diagram with five of the inner seven regions filled. If $|\F|<4n$ there is nothing to prove -- otherwise as remarked in the introduction, we have $\DVC(\F)>1$ and hence there exists a crossing pair in $\F$, i.e.~two sets $F_1,F_2\in\F$ such that all four sets $A:=F_1\cap F_2, B=F_1\cap \overline{F_2}, C = \overline{F_1}\cap F_2$ and $D=\overline{F_1}\cap \overline{F_2}$ are non-empty. Note that since the family $\mathcal{F}$ avoids six regions of the Venn diagram, any third set $F_3$ in $\mathcal{F}$ can properly split at most one of the regions $A,B,C,D$. (We say that a set $S$ properly splits $T$ if both $S\cap T$ and $T\setminus S$ are non-empty.)
		
		Let $\mathcal{F}_A$, $\mathcal{F}_B$, $\mathcal{F}_C$ and $\mathcal{F}_D$ be the disjoint subfamilies of $\mathcal{F}$ that properly split $A,B,C$ and $D$ respectively. Note that there are at most 16 sets in $\F$ which do not properly split any of $A$, $B$, $C$ or $D$ since such a set must be the union of some subset of $\{A,B,C,D\}$. Therefore $$|\mathcal{F}|\le |\mathcal{F}_A|+|\mathcal{F}_B|+|\mathcal{F}_C|+|\mathcal{F}_D|+16.$$
		
		We pick arbitrary representative points $a\in A, b\in B, c\in C$ and $d\in D$. We let $A^+=A\cup\{b,c,d\}$, $B^+=B\cup\{a,c,d\}$, $C^+=C\cup\{a,b,d\}$ and $D^+=D\cup\{a,b,c\}$. Let the projection of the family $\mathcal{F}_A$ onto $A^+$ be denoted $\mathcal{F}_A^+$ and similarly for $B,C,D$. Note that $|\mathcal{F}_A|=|\mathcal{F}_A^+|$, as sets in $\F_A$ do not differ outside of $A$.
		
		Assume that at least two of $|A|,|B|,|C|$ and $|D|$ are bigger than 1. In this case, we can apply the induction hypothesis to each of the families $\mathcal{F}_A^+,\dots,\mathcal{F}_D^+$ since the ground sets have size strictly smaller than $n$.
		
		This allows us to partition $\mathcal{F}_A^+$ into $\mathcal{G}_A^+$ and $\mathcal{H}_A^+$, where $|\mathcal{G}_A^+|\le \alpha(|A|+3)-\beta$ and $\mathcal{H}_A^+$ is pair-like with respect to some basis $\mathcal{P}_A^+\subset \mathcal{P}(A^+)$. We get corresponding statements with $A$ replaced with $B,C$ and $D$.
		
		By replacing the points $b$, $c$ and $d$ in $\mathcal{P}_A^+$ by the sets $B$, $C$ and $D$ (meaning e.g. that a set $\{b,c,x\}$ would become $B\cup C\cup\{x\}$) we get a family $\mathcal{P}_A$. We obtain $\mathcal{P}_B$, $\mathcal{P}_C$ and $\mathcal{P}_D$ similarly. 
		
		Then by replacing the points $b$, $c$ and $d$ in $\mathcal{H}_A^+$ by the sets $B$, $C$ and $D$ we obtain a subfamily of $\mathcal{F}$ which is pair-like with respect to $\mathcal{P}_A$, which we call $\mathcal{H}_A$. We obtain $\mathcal{H}_B$, $\mathcal{H}_C$ and $\mathcal{H}_D$ similarly.
		
		Note that as $\F_A,\F_B,\F_C,\F_D$ were disjoint, so are $\mathcal{H}_A,\mathcal{H}_B,\mathcal{H}_C,\mathcal{H}_D$. We now claim that the family $\mathcal{H}=\mathcal{H}_A\cup\mathcal{H}_B\cup\mathcal{H}_C\cup\mathcal{H}_D$ is pair-like with respect to the basis $\mathcal{P}=\mathcal{P}_A\cup\mathcal{P}_B\cup\mathcal{P}_C\cup\mathcal{P}_D$.
		
		It is clear that the sets in $\mathcal{H}$ are indeed all equal to $P_u\cup P_v$ for disjoint $P_u,P_v\in\mathcal{P}$. Moreover, for any disjoint $P_u,P_v\in\mathcal{P}$ with $P_u\cup P_v\in\mathcal{H}$, there must be some choice of $X\in\{A,B,C,D\}$ such that $P_u\cup P_v\in \mathcal{H}_X$. Then the existence of the desired family $\mathcal{P}_{uv}$ follows from the fact that $\mathcal{H}_X$ is pair-like with respect to $\mathcal{P}_X\subset \mathcal{P}$.
		
		Recall that the size of $\mathcal{G}_A^+$ is at most $\alpha(|A|+3)-\beta$ and we have similar bounds for $\mathcal{G}_B^+$, $\mathcal{G}_C^+$ and $\mathcal{G}_D^+$. Therefore we have
		\begin{equation*} 
		\begin{split}
		|\mathcal{F}|&\le |\mathcal{F}_A|+|\mathcal{F}_B|+|\mathcal{F}_C|+|\mathcal{F}_D|+16\\
		&= |\mathcal{F}_A^+|+|\mathcal{F}_B^+|+|\mathcal{F}_C^+|+|\mathcal{F}_D^+|+16\\
		&=|\mathcal{G}_A^+|+|\mathcal{G}_B^+|+|\mathcal{G}_C^+|+|\mathcal{G}_D^+|+16+|\mathcal{H}|\\
		&\le |\mathcal{H}|+16+\alpha(|A|+3)+\alpha(|B|+3)+\alpha(|C|+3)+\alpha(|D|+3)-4\beta\\
		&\le|\mathcal{H}|+\alpha n-\beta
		\end{split}
		\end{equation*}
		for $\beta$ sufficiently large in terms of $\alpha$.
		
		This means that we have partitioned $\mathcal{F}$ into a pair-like family $\mathcal{F}_2=\mathcal{H}$ and a set $\mathcal{F}\setminus\mathcal{F}_2=\mathcal{F}_1$ with size at most $\alpha n-\beta$, and we are done in this case.
		
		It remains to consider what happens if every crossing pair in $\mathcal{F}$ has three regions of size exactly one, and one region of size $n-3$.
		
		In this case, we split $\mathcal{F}$ into $\mathcal{F}_{\text{small}}=\{F\in\mathcal{F}:|F|\le 2n/3\}$ and $\mathcal{F}_{\text{large}}=\{F\in\mathcal{F}:|F|>2n/3\}$. Note that any three sets in $\mathcal{F}_{\text{large}}$ have non-empty 3-wise intersection. If $|\mathcal{F}_{\text{large}}|>8n$ then by~\cref{lem:gupta} we can find three sets $A,B,C\in\F_{\text{large}}$ so that at least four out of the six sets in $\mathrm{V}_3(A,B,C)\setminus \{A\cap B \cap C, \overline{A}\cap \overline{B}\cap \overline{C}\}$ are non-empty. Since the innermost region is non-empty by the above, this gives a Venn diagram with five of the inner seven regions non-empty.
		
		Any crossing pair in $\mathcal{F}_{\text{small}}$ must in fact have the regions $A$, $B$ and $C$ each of size one (since the sets involved are too small for any of these regions to have size $n-3$). Therefore we may further split $\mathcal{F}_{\text{small}}$ into $\mathcal{F}_{\text{pairs}}=\{F\in \mathcal{F}_{\text{small}}:|F|=2\}$ and $\mathcal{F}_{\text{rest}}=\mathcal{F}_{\text{small}}\setminus \mathcal{F}_{\text{pairs}}$. Since $\mathcal{F}_{\text{rest}}$ cannot have any crossing pairs, $|\mathcal{F}_{\text{rest}}|\le 4n-2$. 
		
		Now we consider $\mathcal{F}_{\text{pairs}}$. This family corresponds to a set of pairs (edges) on the ground-set $[n]$ of size at least $\alpha n-\beta-12n+2$ (else we could have taken $\mathcal{F}_1=\mathcal{F}$ and $\mathcal{F}_2=\emptyset$). Provided that $\alpha n-\beta -12n+2>22n$ we can discard at most $11n$ pairs from $\F_{\text{pairs}}$ to obtain a subfamily $\F_2$ with minimum degree 11 (meaning that for any $x$ which appears as a member of some $X\in\F_2$ appears as a member of at least 11 distinct sets in $\F_2$). The subfamily $\mathcal{F}_2$ of $\mathcal{F}$ is therefore pair-like with respect to the basis of singletons. Note that $|\mathcal{F}\setminus\mathcal{F}_2|\le 12n-2+10n<\alpha n-\beta $ for $\alpha $ sufficiently large. This gives us the decomposition that we require in this case.
	\end{proof}
	
	\section{Deducing Lemma~\ref{lem:main} and Theorem~\ref{thm:main}}\label{sec:deduction}
	
	We begin with a useful lemma.
	
	\begin{lemma}\label{lem:useful}
		Let $\mathcal{A},\mathcal{B}$ and $\mathcal{C}$ be families on a ground set $[n]$ such that each family consists of at least seven pairwise disjoint and non-empty sets. Then we can find $A\in\mathcal{A}, B\in \mathcal{B}$ and $C\in\mathcal{C}$ such that $A\setminus(B\cup C), B\setminus(A\cup C)$ and $C\setminus (A\cup B)$ are each non-empty.
	\end{lemma}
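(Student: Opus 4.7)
The plan is a short double-counting argument. I pick seven pairwise disjoint non-empty sets $A_1,\dots,A_7\in\mathcal{A}$ together with representatives $a_i\in A_i$, and analogously $B_1,\dots,B_7\in\mathcal{B}$ with $b_j\in B_j$ and $C_1,\dots,C_7\in\mathcal{C}$ with $c_k\in C_k$. For any indices $i,j,k\in[7]$, note that if $a_i\notin B_j\cup C_k$ then $a_i\in A_i\setminus(B_j\cup C_k)$, so the first condition in the conclusion holds for the triple $(A_i,B_j,C_k)$; the same observation applies to $b_j$ and $c_k$ for the other two conditions. Hence it suffices to exhibit a triple $(i,j,k)\in[7]^3$ for which $a_i\notin B_j\cup C_k$, $b_j\notin A_i\cup C_k$ and $c_k\notin A_i\cup B_j$ all hold simultaneously.

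Call a triple \emph{bad for condition~1} if $a_i\in B_j\cup C_k$, and analogously for the other two conditions. The key use of disjointness is that since $B_1,\dots,B_7$ are pairwise disjoint, at most one index $j$ satisfies $a_i\in B_j$, and likewise at most one $k$ satisfies $a_i\in C_k$. So for each fixed $i$ the number of pairs $(j,k)$ with $a_i\in B_j\cup C_k$ is at most $1\cdot 7+7\cdot 1=14$, giving at most $98$ triples bad for condition~1 in total. By the symmetric argument at most $98$ triples are bad for each of conditions~2 and~3. Hence the number of triples that are bad for some condition is at most $3\cdot 98=294<343=7^3$, so at least one triple $(i,j,k)$ is good, and the corresponding $(A_i,B_j,C_k)$ is as required.

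There is essentially no obstacle here beyond verifying that the numerology works: the hypothesis of seven pairwise disjoint sets is exactly what makes $3\cdot 7\cdot 14=294$ strictly smaller than the total count $7^3=343$. (An analogous calculation with only six disjoint sets per family would yield the non-strict equality $3\cdot 6\cdot 12=216=6^3$, so the constant $7$ in the hypothesis is presumably chosen precisely to make this counting argument work.)
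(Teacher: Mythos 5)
Your proof is correct, but it takes a genuinely different route from the paper's. The paper argues structurally: from any four sets in $\mathcal{A}$ and four in $\mathcal{B}$, it extracts three \emph{weakly separated} pairs $(A_i,B_j)$ (pairs with both $A_i\setminus B_j$ and $B_j\setminus A_i$ non-empty, using disjointness within each family) with all six indices distinct, picks a representative in each of the six differences, discards from $\mathcal{C}$ the at most six sets containing any of these representatives, takes a surviving $C$, and finishes by noting that any $c\in C$ lies outside at least one of the three unions $A_i\cup B_j$. You instead fix seven sets and a representative point in each family and run a direct union bound over the $7^3$ triples, using pairwise disjointness to cap the bad pairs per fixed index at $14$ and getting $3\cdot 7\cdot 14=294<343$. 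Both proofs exploit the hypothesis in the same spirit (a point lies in at most one set of a disjoint family) and both burn the constant $7$ as counting slack, but your argument dispenses with the weakly-separated-pair machinery entirely and is arguably the cleaner of the two. One small caveat on your closing parenthetical: the claim that $7$ is precisely the threshold applies only to your non-sharp bound of $14$; the exact count of bad pairs per index is at most $2m-1$ (not $2m$) when both a $j^*$ and a $k^*$ exist, so $3\cdot 6\cdot 11=198<216$ shows $6$ disjoint sets per family would also suffice for this counting argument. This does not affect the correctness of the proof as written.
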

	\begin{proof}
		We call a pair of sets $X$ and $Y$ \emph{weakly separated} if $X\setminus Y$ and $Y\setminus X$ are non-empty. First we claim that given families $U=\{U_1, U_2\}$ and $V=\{V_1,V_2\}$ of disjoint subsets of $[n]$ we can find a weakly separated pair $(U_i,V_j)$. This is straightforward: if $U_1$ and $V_1$ are not weakly separated then $U_1\subset V_1$ or $V_1\subset U_1$. In the first case $U_1,V_2$ are weakly separated (since $V_1,V_2$ are disjoint), and in the second case $V_1, U_2$ are weakly separated.
		
		Therefore given $\{A_1,A_2,A_3,A_4\}$ and $\{B_1,B_2,B_3,B_4\}$ we can find three weakly separated pairs $(A_i,B_j)$, $(A_k,B_l)$ and $(A_r, B_s)$ with $|\{i,k,r\}|=|\{j,l,s\}|=3$. Given the weakly separated pair $A_i, B_j$ we can pick representative points $x_1$ and $y_1$ from $A_i\setminus B_j$ and $B_j\setminus A_i$ respectively. Similarly, we can pick representative points $x_2,y_2,x_3$ and $y_3$ from the other pairs.
		
		Now we eliminate from $\mathcal{C}$ the sets containing any of the $x_i,y_j$. This removes at most six sets from $\mathcal{C}$ and so some set $C$ remains. If $C\setminus(A_i\cup B_j)$ is non-empty then $A_i, B_j$ and $C$ have the required property. Similarly, we are done if $C\setminus(A_k\cup B_l)$ or $C\setminus(A_r\cup B_s)$ is non-empty. Consider some $c\in C$. The element $c$ belongs to at most one of $A_i, A_k$ or $A_r$ and to at most one of $B_j, B_l$ or $B_s$ so at least one of the pairs $A_i\cup B_j$, $A_k\cup B_l$ or $A_r\cup B_s$ does not contain $c$ and the proof is complete.
	\end{proof}
	
	\begin{proof}[Proof of~\cref{lem:main}]
		We will essentially follow the idea of Lemma~\ref{lem:boosting} as used in the induction step of a proof of the Sauer-Shelah lemma, but we will use~\cref{lem:struct} to obtain structural information about our family in the case where we do not immediately obtain a Venn diagram with six of the inner seven regions filled.
		
		We proceed by induction. The cases $n\le 7$, say, are trivial for $C$ sufficiently large. Now let us assume that $\mathcal{F}$ is a family of size $Cn^2$. For any element $x$ of the ground set, we may consider the families $\mathcal{F}_x=\{F\setminus\{x\}: F\in \mathcal{F}, ~ x\in F\}$ and $\mathcal{F}_{\overline{x}}=\{F:F\in \mathcal{F}, ~ x\not\in F\}$, which may be treated as families on a ground set of size $[n-1]$ by projecting onto $[n]\setminus\{x\}$. Note the similarity with the proof of Lemma~\ref{lem:boosting} -- a difference here is that we will need to consider all the families $\F_x$ and $\F_{\overline{x}}$ rather than a single one.
		
		We let $\mathcal{F}_1(x)=\mathcal{F}_x\cup\mathcal{F}_{\overline{x}}$ and $\mathcal{F}_2(x)=\mathcal{F}_x\cap\mathcal{F}_{\overline{x}}$. We claim that we may assume that $\mathcal{F}_2(x)$ is sufficiently large to apply Lemma~\ref{lem:struct} and find a large, structured subfamily. Indeed if $|\mathcal{F}_1(x)|\ge C(n-1)^2$ then we are done by induction, so we may assume that $|\mathcal{F}_1(x)|<C(n-1)^{2}$ for all $x$. Therefore, for all $x$ we have $|\mathcal{F}_2(x)|>2Cn-C$, since $|\mathcal{F}|=|\mathcal{F}_1(x)|+|\mathcal{F}_2(x)|$.
		
		Now we apply~\cref{lem:struct} to $\mathcal{F}_2(x)$. Observe that if $\mathcal{F}_2(x)$ contains a Venn diagram with five of the inner seven regions filled, then $\mathcal{F}$ contains a Venn diagram with six of the inner seven regions filled by incorporating the element $x$ as in Lemma~\ref{lem:boosting}: for each set $F\in \mathcal{F}_2(x)$, both of the sets $F$ and $F\cup\{x\}$ belong to $\mathcal{F}$. Therefore $\mathcal{F}_2(x)$ contains a pair-like family $\mathcal{H}_x$ of size at least $2Cn-C-\alpha n+\beta> Cn$ for $C$ sufficiently large in terms of $\alpha$. 
		
		We obtain such a family for every $x\in [n]$. We write $\mathcal{H}_x^+$ for the family $\{F\cup\{x\}:F\in \mathcal{H}_x\}\subset\mathcal{F}$ on ground set $[n]$. Observe that
		$$\sum_{x\in [n]}|\mathcal{H}_x^+|>Cn^2.$$
		
		Since the sum of the sizes of the $\mathcal{H}_x^+$ is larger than the size of $\F$, there exists some $F\in \F$ such that $F\in \mathcal{H}_x^+\cap \mathcal{H}_y^+$ for distinct $x$ and $y$. Thus we can pick some $P_1\cup P_2\in \mathcal{H}_x$ so that $F=\{x\}\cup P_1\cup P_2$   (where $P_1$, $P_2$ are parts of the associated basis $\mathcal{P}_x$), and pick some $Q_1\cup Q_2\in \mathcal{H}_y$ so that $F=\{y\}\cup Q_1\cup Q_2$ (where $Q_1, Q_2$ are parts of the associated basis $\mathcal{Q}_y$).
		
		Without loss of generality, we suppose that $y\in P_1$. 
		
		By the definition of pair-like (specifically the existence of the family $\mathcal{P}_{12}$), we can find parts $P_3$ and $P_4$ disjoint from each other and from $F$ such that $P_1\cup P_3\in \mathcal{H}_x$ and $P_1\cup P_4\in \mathcal{H}_x$.
		
		\begin{claim}
			We may assume that $P_1=\{y\}$.
		\end{claim}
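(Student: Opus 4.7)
The plan is to show that if $P_1 \supsetneq \{y\}$, then we can directly exhibit three sets in $\mathcal{F}$ forming a 3-Venn diagram with six of the inner seven regions filled. Since that is precisely the conclusion of \cref{lem:main}, the bad case would already finish the proof, so we lose nothing by assuming $P_1 = \{y\}$ going forward.

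Suppose for contradiction that some $z \in P_1 \setminus \{y\}$ exists. Since $z \in F = \{y\}\cup Q_1\cup Q_2$ and $z \neq y$, either $z \in Q_1$ or $z \in Q_2$. These two cases are symmetric after interchanging the roles of the pair $(x,\mathcal{H}_x,\mathcal{P}_x)$ with $(y,\mathcal{H}_y,\mathcal{Q}_y)$, so I would assume $z \in Q_2$. Applying pair-likeness of $\mathcal{H}_y$ to the popular component $Q_1$ yields a family $\mathcal{Q}_{12}$ of at least ten partners disjoint from $Q_1\cup Q_2$, each providing a set of the form $\{y\}\cup Q_1\cup Q_3 \in \mathcal{F}$.

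Next I would consider the three candidate sets
\[
A = F = \{x\}\cup P_1\cup P_2, \qquad B = \{x\}\cup P_1\cup P_3, \qquad C = \{y\}\cup Q_1\cup Q_3,
\]
all of which lie in $\mathcal{F}$. The innermost region contains $\{x,y\}$ (via $y\in P_1 \subseteq A\cap B$ and $x\in Q_1 \subseteq A\cap C$, together with $x\in B$ and $y\in C$ by construction). The element $z$ then witnesses non-emptiness of $A\cap B\cap \overline{C}$: indeed $z\in P_1 \subseteq A\cap B$, while $z\in Q_2$ and $Q_3,Q_1$ are both disjoint from $Q_2$, so $z\notin C$. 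To populate four further regions, I would exploit the flexibility afforded by the at-least-ten partners in $\mathcal{P}_{12}$, $\mathcal{P}_{21}$ and $\mathcal{Q}_{12}$: passing to seven pairwise disjoint representatives in each and applying \cref{lem:useful} lets me pick $P_3$ (and similarly $Q_3$) so that chosen elements of $P_2$, $P_3$, $Q_1\setminus F$ and $Q_3$ land in four distinct regions among $A\cap\overline{B}\cap\overline{C}$, $\overline{A}\cap B\cap\overline{C}$, $A\cap\overline{B}\cap C$, $\overline{A}\cap B\cap C$, and $\overline{A}\cap\overline{B}\cap C$, leaving at most one of the inner seven regions empty.

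The main obstacle will be the combinatorial bookkeeping to confirm that six inner regions are non-empty: because the bases $\mathcal{P}_x$ and $\mathcal{Q}_y$ live on the same ground set, the $P_i$ and $Q_j$ may overlap in ways constrained by $F = \{x\}\cup P_1\cup P_2 = \{y\}\cup Q_1\cup Q_2$, and care is needed to ensure that the element selected to witness one region is not simultaneously sucked into another. The crucial leverage, as in the proof of \cref{lem:useful}, is the ten-fold freedom in choosing partners on each side, which permits us to discard adversarial overlaps while still preserving a workable $P_3$ and $Q_3$.
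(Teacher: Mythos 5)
Your proposed triple $A = F$, $B = \{x\}\cup P_1\cup P_3$, $C = \{y\}\cup Q_1\cup Q_3$ is genuinely different from the paper's, and unfortunately it cannot be made to work: the heavy overlap between these three sets means that in adversarial cases at most five of the inner seven regions can be non-empty, no matter how cleverly you exploit the ten-fold freedom in choosing $P_3$ and $Q_3$.

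Before the counting problem there are two smaller issues. The stated symmetry $(x,\mathcal{H}_x,\mathcal{P}_x)\leftrightarrow(y,\mathcal{H}_y,\mathcal{Q}_y)$ does not let you assume $z\in Q_2$, because swapping $x$ and $y$ changes the goal (which is a statement about $P_1$ on the $x$-side). You could instead relabel $Q_1\leftrightarrow Q_2$, but that clashes with your later tacit requirement that $x\in Q_1$: if $x$ and $z$ happen to lie in the same $Q_i$, you cannot simultaneously place $x\in Q_1$ and $z\in Q_2$. And the assumption $x\in Q_1$ is used ("$x\in Q_1\subseteq A\cap C$") but never justified.

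The fatal problem, though, is the region count. Take $P_1=\{y,z\}$, $P_2=\{w\}$, so $F=\{x,y,z,w\}$, and on the other side $Q_1=\{x,w\}$, $Q_2=\{z\}$; let all the pair-like partners ($P_3$, $Q_3$, etc.) be singletons disjoint from $F$ and from each other. This is a legitimate configuration for a pair-like family, and here $x\in Q_1$, $z\in Q_2$ as you want. Then $A=\{x,y,z,w\}$, $B=\{x,y,z,p\}$, $C=\{x,y,w,q\}$ with $p\neq q$, and a direct check gives $A\cap\overline{B}\cap\overline{C}=\emptyset$ and $\overline{A}\cap B\cap C=\emptyset$, so only five inner regions are filled; choosing $p=q$ is even worse. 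The structural reason is that $A\supsetneq B\cap C\cap F$ eats both "$A$-only" subregions, while $P_3$ and $Q_3$ being singletons means they split between exactly two of the three "$\overline{A}$" regions. The paper avoids this by a more asymmetric choice: it takes $A=P_1\cup P_3$ (crucially \emph{not} containing $x$), $B=\{x\}\cup P_1\cup P_4$ with a second fresh partner $P_4$, and $C=Q_1\cup Q_2=F\setminus\{y\}$ (crucially with nothing outside $F$). Then $z$, $x$, $y$, $P_3$, $P_4$, $P_2$ land cleanly in six distinct inner regions with no case analysis at all. You should abandon $A=F$ and the $Q_3$-partner in $C$ and use this cleaner triple.
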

		\emph{Proof of claim.} Suppose that there exists $z\neq y$ such that $z\in P_1$. Then we consider the following sets:
		\begin{enumerate}
			\item $A=P_1\cup P_3$ which belongs to $\mathcal{F}$ since $P_1\cup P_3\in \mathcal{H}_x$,
			\item $B=\{x\}\cup P_1\cup P_4$ which belongs to $\mathcal{F}$ since $P_1\cup P_4\in \mathcal{H}_x$, and
			\item $C=Q_1\cup Q_2$ which belongs to $\mathcal{F}$ since $Q_1\cup Q_2\in \mathcal{H}_y$.
		\end{enumerate}
		Then we note that $A$, $B$ and $C$ give a Venn diagram with six out of seven inner regions non-empty: the regions in one set are covered by $P_3,P_4$ and $(Q_1\cup Q_2)\cap P_2$ respectively; the innermost region contains $z$, and points $x$ and $y$ go in $\overline{A}\cap B\cap C$ and $A\cap B \cap \overline{C}$ respectively. 
		
		So we must have that $P_1=\{y\}$.\hfill$\lozenge$
		
		By the same argument, we also have the following.
		\begin{claim}
			We may assume that $Q_1=\{x\}$.\hfill$\lozenge$
		\end{claim}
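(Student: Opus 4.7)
My plan is to run the same argument as the first claim with the roles of $(x, \mathcal{P}_x, \mathcal{H}_x)$ and $(y, \mathcal{Q}_y, \mathcal{H}_y)$ exchanged. To set up the symmetric hypothesis playing the role of ``$y \in P_1$'', I first use the outcome of the first claim: since $P_1 = \{y\}$ we have $F = \{x,y\} \cup P_2$, and comparing with $F = \{y\} \cup Q_1 \cup Q_2$ gives the identity
\[
\{x\} \cup P_2 = Q_1 \cup Q_2.
\]
In particular $x \in Q_1 \cup Q_2$, so after possibly swapping the labels on $Q_1$ and $Q_2$ I may assume $x \in Q_1$. The same identity also yields $P_2 = (Q_1 \setminus \{x\}) \cup Q_2$, which will be the bridge between the two decompositions of $F$ in the region check below.

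Next, I would invoke the pair-like property of $\mathcal{H}_y$ with respect to the basis $\mathcal{Q}_y$ at the decomposition $Q_1 \cup Q_2$ to obtain parts $Q_3, Q_4 \in \mathcal{Q}_y$, disjoint from each other and from $F$, with $Q_1 \cup Q_3$ and $Q_1 \cup Q_4$ both in $\mathcal{H}_y$. Suppose for contradiction there exists $w \in Q_1 \setminus \{x\}$. I would then consider the sets
\[
A := Q_1 \cup Q_3, \qquad B := \{y\} \cup Q_1 \cup Q_4, \qquad C := P_1 \cup P_2,
\]
all three of which lie in $\mathcal{F}$ (the first two because $\mathcal{H}_y \subseteq \mathcal{F}_y \cap \mathcal{F}_{\bar y}$, the third because $\mathcal{H}_x \subseteq \mathcal{F}_{\bar x}$). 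A direct region-by-region check, mirroring the one from the first claim and using the identity $P_2 = (Q_1 \setminus \{x\}) \cup Q_2$, shows that $\{A,B,C\}$ fills six of the inner seven regions of its 3-Venn diagram: the innermost region contains $w$, the region $\bar A \cap B \cap C$ contains $y$, the region $A \cap B \cap \bar C$ contains $x$, and the three ``only one set'' regions $A \cap \bar B \cap \bar C$, $\bar A \cap B \cap \bar C$, $\bar A \cap \bar B \cap C$ are filled by the parts $Q_3$, $Q_4$, $Q_2$ respectively. This contradicts the standing assumption on $\mathcal{F}$ and so forces $Q_1 = \{x\}$.

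I do not anticipate a substantive obstacle: the entire argument is a syntactic mirror of the first claim, and the bookkeeping facts it requires (for example $Q_3 \cap P_2 = \emptyset$ because $Q_3$ avoids $F \supseteq P_2$, and the non-emptiness of $Q_2, Q_3, Q_4$) are supplied directly by the definition of a pair-like family together with the identity linking the two decompositions of $F$.
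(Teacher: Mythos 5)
Your proposal is correct and is exactly the symmetric argument the paper alludes to with ``by the same argument'': swap the roles of $(x,\mathcal{P}_x,\mathcal{H}_x)$ and $(y,\mathcal{Q}_y,\mathcal{H}_y)$, note that $x\in Q_1\cup Q_2$ (which in fact already follows from $x\in F=\{y\}\cup Q_1\cup Q_2$ and $x\neq y$, without invoking the first claim), and run the same region check with $A=Q_1\cup Q_3$, $B=\{y\}\cup Q_1\cup Q_4$, $C=P_1\cup P_2=F\setminus\{x\}$. The region-by-region verification you give is accurate, so this matches the paper's approach.
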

		
		Therefore $Q_2=P_2=F\setminus\{x,y\}$. 
		
		Let $\mathcal{P}_{21}=\{P\in\mathcal{P}_x:P\cup P_2\in \mathcal{H}_x\}$. Since $\mathcal{H}_x$ is pair-like and $P_1\cup P_2\in \mathcal{H}_x$, there are at least 10 sets $A_1,\dots,A_{10}$ in $\mathcal{P}_{21}$ which are pairwise disjoint and disjoint from $P_1\cup P_2$. Let $\mathcal{A}=\{A_1,\dots,A_{10}\}$.
		
		Similarly, let $\mathcal{Q}_{21}=\{Q\in\mathcal{Q}_y:Q\cup Q_2\in \mathcal{H}_y\}$. Since $\mathcal{H}_y$ is pair-like and $Q_1\cup Q_2\in \mathcal{H}_y$, there are at least 10 sets $B_1,\dots,B_{10}$ in $\mathcal{Q}_{21}$ which are pairwise disjoint and disjoint from $Q_1\cup Q_2$. Let $\mathcal{B}=\{B_1,\dots,B_{10}\}$.
		
		Finally, we let $\mathcal{P}_{12}=\{P\in\mathcal{P}_x:P\cup P_1\in \mathcal{H}_x\}$. Since $\mathcal{H}_x$ is pair-like and $P_1\cup P_2\in \mathcal{H}_x$, there are at least 10 sets $C_1,\dots,C_{10}$ in $\mathcal{P}_{12}$ which are pairwise disjoint and disjoint from $P_1\cup P_2$. Let $\mathcal{C}=\{C_1,\dots,C_{10}\}$.
		
		The families $\mathcal{A},\mathcal{B}$ and $\mathcal{C}$ are three families of disjoint subsets on the same ground set $[n]\setminus F$. Thus we may apply~\cref{lem:useful} to find three sets $A\in\mathcal{A}, B\in\mathcal{B}, C\in\mathcal{C}$ such that $A\setminus(B\cup C), B\setminus(A\cup C)$ and $C\setminus (A\cup B)$ are each non-empty.
		
		%
		
		Now consider the three sets
		\begin{enumerate}
			\item $A'=\{x\}\cup P_2\cup A$ which belongs to $\mathcal{F}$ since $P_2\cup A\in \mathcal{H}_x$,
			\item $B'=\{y\}\cup Q_2\cup B$ which belongs to $\mathcal{F}$ since $Q_2\cup B\in \mathcal{H}_y$, and
			\item $C'=\{x\}\cup P_1\cup C$ which belongs to $\mathcal{F}$ since $P_1\cup C\in \mathcal{H}_x$.
		\end{enumerate}
		
		We see that the point $x$ belongs to $A'\cap\overline{B'}\cap C'$, the point $y$ belongs to $\overline{A'}\cap B'\cap C'$, the set $P_2=Q_2$ belongs to $A'\cap B'\cap\overline{C'}$ and the sets $A, B$ and $C$ deal with the regions in a single set. This gives six out of seven inner regions non-empty and we are done.
	\end{proof}
	
	\begin{corollary}\label{cor:almostfinal}
		There exists an absolute constant $C$ such that any family $\F$ on ground set $[n]$ of size $|\F|\geq Cn^3$ contains a 3-Venn with all seven inner regions filled.
	\end{corollary}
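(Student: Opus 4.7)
The plan is to obtain Corollary \ref{cor:almostfinal} as an immediate one-step consequence of Lemma \ref{lem:main} via a single application of the boosting lemma (Lemma \ref{lem:boosting}). The lemma takes a hypothesis of the form ``size $Cn^k$ forces $s$ out of $r$ regions of a fixed partial 3-Venn $\mathrm{V}'$ to be filled'' and upgrades the conclusion to $s+1$ filled regions at the cost of an extra factor of $n$.

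Concretely, I would take $\mathrm{V}'(A,B,C)$ to be the inner seven regions $\mathrm{V}_3(A,B,C) \setminus \{\overline{A}\cap\overline{B}\cap\overline{C}\}$, which is a proper subset of the full set of regions and so is a legitimate partial 3-Venn in the sense defined before Lemma \ref{lem:boosting}. With this choice we have $r = 7$. Lemma \ref{lem:main} supplies the base hypothesis with $k=2$ and $s=6$: any family of size at least $Cn^2$ contains three sets filling at least $6$ of these $7$ regions. Applying Lemma \ref{lem:boosting} once, we obtain a constant $C'$ such that any family of size at least $C'n^3$ contains three sets filling at least $6+1 = 7$ of the inner seven regions, i.e.\ \emph{all} of them. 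This is exactly the conclusion of the corollary.

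There is no real obstacle here, since all the difficult work was already carried out in the structural analysis behind Lemma \ref{lem:main}; I would just have to verify that the final constant can be taken absolute (it is, since $\alpha,\beta$ in Lemma \ref{lem:struct} are absolute and Lemma \ref{lem:boosting} introduces only one further multiplicative constant depending on $C$).
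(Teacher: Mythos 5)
Your proposal is correct and is essentially identical to the paper's own (very short) proof: the paper likewise takes $\mathrm{V}'(A,B,C) = \mathrm{V}_3(A,B,C)\setminus\{\overline{A}\cap\overline{B}\cap\overline{C}\}$ and applies Lemma~\ref{lem:boosting} once to Lemma~\ref{lem:main}. Your extra remarks checking that $\mathrm{V}'$ is a legitimate (proper) partial 3-Venn diagram with $r=7$, that $s=6<r$, and that the constants remain absolute are all accurate.
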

	\begin{proof}
	We combine \cref{lem:main} with \cref{lem:boosting} using the partial 3-Venn diagram $\mathrm{V}'(A,B,C)=\mathrm{V}_3(A,B,C)\setminus\{\overline{A}\cap\overline{B}\cap\overline{C}\}$.
	\end{proof}
	
	\begin{proof}[Proof of~\cref{thm:main}]
		Let $\F$ be a family on ground set $[n]$ with $|\F|\geq (2C+3D)n^3$, where $C$ is the constant given by~\cref{cor:almostfinal} and $D$ is the constant given by~\cref{cor:cor1}. Partition $\F$ into $\F= \F_{\text{small}}~\dot{\cup}~\F_{\text{mid}}~\dot{\cup}~\F_{\text{big}} $ where 
		$$\F_{\text{small}}=\{F\in\F:|F|< n/3\},\quad\F_{\text{mid}}=\{F\in\F:n/3\leq |F|\leq 2n/3\},$$
		$$\F_{\text{big}}=\{F\in\F:|F|> 2n/3\}. $$
		If $|\F_{\text{mid}}|\geq 3Dn^3$ then by~\cref{cor:cor1} it contains a full 3-Venn diagram. Hence we may assume $|\F_{\text{small}} \cup \F_{\text{big}}|\geq 2Cn^3$. Without loss of generality we may assume $|\F_{\text{small}}|\geq Cn^3$ -- otherwise we have $|\F_{\text{big}}|\geq Cn^3$ and we may simply replace all sets by their complements. By~\cref{cor:almostfinal}, $\F_{\text{small}}$ contains three sets $A,B,C$ with all seven inner regions of their Venn diagram being non-empty. However, as $|A|,|B|,|C|<n/3$ we also have $\overline{A}\cap \overline{B}\cap \overline{C}\neq \emptyset$ and the proof is complete.
	\end{proof}
	
	\section{Conclusion and open questions}
	
	Denote by $f_k(n)$ the maximum size of a family $\F$ on ground set $[n]$ with $\DVC(\F)\leq k$. Then $f_1(n)=4n-2$ and the main result of this paper is that $f_2(n)=\Theta(n^3)$. A natural question is as follows.
	\begin{question}\label{ques:kvenn}
		Fix some $k\geq 3$. What is the order of magnitude of $f_k(n)$?
	\end{question}
	
	Answering Question~\ref{ques:kvenn} for all $k$ would complete a dual version of the Sauer-Shelah lemma.
	
	The trivial bounds are
	$$cn^{2^{k-1}-1}\leq f_k(n)\leq Cn^{2^k-1}.$$
	Indeed, the lower bound is the straightforward construction of taking all sets of size $2^{k-1}-1$. If this did contain a Venn diagram on $k$ sets $A_1,\ldots,A_k$ with all $2^k$ regions non-empty, then each $A_i$ would be partitioned into $2^{k-1}$ non-empty regions which is impossible as $|A_i|<2^{k-1}$. The upper bound follows from the observation that if $|\F|\geq Cn^{2^k-1}$ then $\mathrm{VC}(\F)\geq 2^k$. So by~\cref{lem:sauershelah} $\F$ shatters a set of size $2^k$ and hence it contains a $k$-Venn diagram. We believe that the lower bound gives the right order of magnitude.

	\bibliographystyle{plain}
	\bibliography{vennbib}

\end{document}